\makeatletter
\newcommand{\input@path}{%
  {../data/}%
  {../associative_theories/}%
  {../associative_theories/Countermodels/}%
  {../}}\makeatother

\documentclass[12pt]{amsart}
\usepackage[utf8]{inputenc}
\usepackage[T1]{fontenc}
\usepackage{lmodern}
\usepackage{amsmath, amssymb, amsthm, mathtools}
\usepackage{geometry}
\usepackage{xcolor, graphicx}
\usepackage{array, booktabs}
\usepackage{needspace}

\DeclareUnicodeCharacter{221E}{\ensuremath{\infty}}

\usepackage{listings}
\lstnewenvironment{lean}{\lstset{language=lean}}{}
\definecolor{kwcolor}{rgb}{0.8, 0.1, 0.1}    
\definecolor{taccolor}{rgb}{0.1, 0.1, 0.8}    
\definecolor{opcolor}{rgb}{0.5, 0.1, 0.5}    
\newcommand{\leancomment}{\color{gray}}
\lstdefinelanguage{lean} {
  basicstyle = {\scriptsize \ttfamily},
  columns = fullflexible,
  keepspaces = true,
  morecomment = [l][\leancomment]{--},
  morecomment = [s][\color{green!50!black}]{@[}{]},
  morekeywords = [1]{theorem, by, fun, let, have, obtain},
  keywordstyle = [1]{\ttfamily\color{kwcolor}},
  morekeywords = [2]{use, constructor, intro, only, simp, decide, decideFin, by_contra, subsumption},
  keywordstyle = [2]{\ttfamily\color{taccolor}},
  literate =
  {:=}{{{\color{kwcolor}\raise1pt\hbox{:}\hspace{-1pt}=}}}{2}
  {=>}{{{\color{kwcolor}=>}}}{2}
  {∀}{{{\color{opcolor}\ensuremath{\forall}}}}{1}
  {∃}{{{\color{opcolor}\ensuremath{\exists}}}}{1}
  {∧}{{{\color{opcolor}\ensuremath{\wedge}}}}{1}
  {¬}{{{\color{opcolor}\ensuremath{\lnot}}}}{1}
  {×}{{{\color{opcolor}\ensuremath{\times}}}}{1}
  {◇}{{{\color{opcolor}\ensuremath{\diamond}}}}{1}
  {·}{{{\color{taccolor}\ensuremath{\bullet}}}}{1}
  {≠}{{{\color{opcolor}\ensuremath{\neq}}}}{1}
  {ℕ}{{\ensuremath{\mathbb{N}}}}{1}
  {⟨}{{\ensuremath{\langle}}}{1}
  {⟩}{{\ensuremath{\rangle}}}{1}
  {«}{\color{gray}{\guillemetleft}}{1}
  {»}{{\color{gray}\guillemetright}\color{black}}{1}
  ,
}
\usepackage{iftex}
\ifpdftex\else
\usepackage{newunicodechar}
\newunicodechar{∃}{\ensuremath{\exists}}
\newunicodechar{∧}{\ensuremath{\wedge}}
\newunicodechar{¬}{\ensuremath{\lnot}}
\newunicodechar{×}{\ensuremath{\times}}
\newunicodechar{◇}{\ensuremath{\diamond}}
\newunicodechar{·}{\ensuremath{\bullet}}
\newunicodechar{≠}{\ensuremath{\neq}}
\newunicodechar{ℕ}{\ensuremath{\mathbb{N}}}
\newunicodechar{⟨}{\ensuremath{\langle}}
\newunicodechar{⟩}{\ensuremath{\rangle}}
\newunicodechar{«}{\guillemetleft}
\newunicodechar{»}{\guillemetright}
\newunicodechar{–}{\textendash}
\newunicodechar{’}{\textquoteright}
\newunicodechar{“}{\textquotedblleft}
\newunicodechar{”}{\textquotedblright}
\newunicodechar{š}{\v s}
\fi

\usepackage[hidelinks]{hyperref}
\usepackage{cleveref}
\usepackage{aliascnt}

\theoremstyle{plain}
\newtheorem{theorem}{Theorem}[section]
\renewcommand{\theHtheorem}{\theHsection.\the\value{theorem}} 
\newcommand{\mynewtheorem}[2]{
  \newaliascnt{#1}{theorem}
  \newtheorem{#1}[#1]{#2}
  \aliascntresetthe{#1}
  \expandafter\providecommand\csname #1autorefname\endcsname{#2}
}
\mynewtheorem{proposition}{Proposition}
\mynewtheorem{lemma}{Lemma}
\mynewtheorem{corollary}{Corollary}
\mynewtheorem{definition}{Definition}
\theoremstyle{definition}
\mynewtheorem{example}{Example}
\mynewtheorem{remark}{Remark}

\newcommand{\op}{\diamond}
\newcommand{\implicitop}{}

\newcommand{\Equaref}[1]{\href{https://teorth.github.io/equational_theories/implications/?#1}{\textrm{\textup{E}}#1}}
\newcommand{\Equadef}[2]{\expandafter\gdef\csname Equa#1\endcsname{\ensuremath{#2}}}
\newcommand{\Equareffull}[1]{\Equaref{#1}\ifmmode\,\else\ \fi\ensuremath{(\csname Equa#1\endcsname)}}
\DeclareMathOperator{\hen}{hen}

\begingroup
\ExplSyntaxOn
\exp_args_generate:n { xV }
\ior_new:N \g_my_ior
\ior_open:Nn \g_my_ior { associative_equations.txt }
\ior_map_inline:Nn \g_my_ior
  {
    \seq_set_split:Nnn \l_tmpa_seq { : } {#1}
    \tl_set:Nx \l_tmpb_tl { \seq_item:Nn \l_tmpa_seq { 2 } }
    \tl_replace_all:Nnn \l_tmpb_tl { ◇ } { \implicitop }
    \exp_args:NxV \Equadef { \seq_item:Nn \l_tmpa_seq { 1 } } \l_tmpb_tl
  }
\endgroup

\title[Lattice of 456 semigroup varieties]{Lattice of 456 semigroup varieties\\from equations of order up to 4}
\author{Bruno Le Floch}
\address{CNRS and Laboratoire de Physique Th\'eorique et Hautes \'Energies, Sorbonne Universit\'e, blefloch@lpthe.jussieu.fr, ORCID 0000-0002-3965-9705}
\date{September 2026}

\begin{document}

\begin{abstract}
  We consider the 653 equational laws of order up to 4 for an associative binary operation, and all of their conjunctions.
  We determine that there are only 456 equivalence classes of such conjunctions (associative equational theories), and find all implications between them.
  The conjunction operation makes this set of theories into a semi-lattice.
  These results are formalized in Lean.
  This is a semigroup analogue of the Equational Theories Project, but extended to conjunctions of equations.
\end{abstract}

\begingroup
\def\uppercasenonmath#1{} 
\let\MakeUppercase\relax 
\maketitle
\endgroup

\setcounter{tocdepth}{2}
\tableofcontents

\section{Introduction and conclusions}

\subsection{Semigroup equational laws}

Many algebraic structures can be defined as a set equipped with unary, binary or other operations on that set, subject to certain equations such as associativity or commutativity.  Here we concentrate on \emph{semigroups}, which are sets~$S$ equipped with a binary operation $\op:S\times S\to S$ that is associative, namely
\begin{equation}\label{eq4512-intro}
  \forall x,y,z\in S , \ x \op (y \op z) = (x \op y) \op z ,
\end{equation}
and we ask what structures can be described as semigroups subject to further equations, with every variable being subject to a universal quantifier, as in~\eqref{eq4512-intro}.

Associativity allows us to omit parentheses, and we also omit the~$\op$ symbol for brevity.
Throughout this work we will \emph{leave universal quantifiers implicit}.
As a prototypical example, a \emph{band} is a semigroup where the associative operation~$\op$ obeys the idempotence law (implicitly for all $x\in S$)
\begin{equation}\label{eq3-intro}
  x = x \implicitop x .
\end{equation}
We determine all implications between identities of order (number of operations) up to~$4$ and their arbitrary conjunctions.
Specifically, the outcome of this work is the following theorem.

\needspace{6\baselineskip}
\begin{theorem}
  Any associative equational law of order at most~$4$ is equivalent to one of $122$ laws, listed in \autoref{app:allequations}.
  Any conjunction of such laws sits in one of $456$ equivalence classes, listed in \autoref{app:alltheories} as minimal and maximal subsets of the~$122$ laws.
  In terms of these maximal subsets, the order and the meet operation on semigroup varieties is given by the subset relation and the union.
  These results are unchanged when restricting to finite semigroups.
\end{theorem}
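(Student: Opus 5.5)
The theorem is a finite classification, so the proof will be a computation organized in three phases: enumerate the laws, prove the positive implications, and separate everything else by countermodels. All certificates will be checked in Lean.

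\textbf{Enumeration and normalization.} I will list all words $w = w'$ in the free semigroup with total length at most~$5$ on each side, so that each side has at most $4$ operations. I will then reduce modulo renaming of variables and swapping the two sides. This should produce the $653$ laws mentioned in the abstract. The collapse to $122$ laws comes from exhibiting, for each of the $653$, an equivalent representative. Each equivalence is a pair of equational derivations: substitutions, multiplication on the left or right by a term, and chaining, all carried out in the free semigroup. For the three other assertions about $653$ versus $122$, I will rely on the countermodels of the last phase.

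\textbf{Positive implications.} A theory is a subset of the $122$ laws, and I will represent it by its closure, namely the set of all $122$ laws it implies. The claim that there are $456$ classes is then the claim that exactly $456$ closed sets arise. Closure is computed by saturation. I will find a family of implications $\{E_{i_1},\dots,E_{i_k}\}\Rightarrow E_j$ with $k$ small, and each one will be proved by an explicit rewriting derivation. Candidates will be found by completion or Knuth--Bendix-style search on the finitely many short words, or by an automated prover such as Vampire or egg, and then replayed in Lean. I expect $k\le 2$ or $3$ to suffice. I will then close every subset under these rules, and the distinct closed sets produced will be the $456$ maximal subsets. Two facts follow from closedness. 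First, meet of varieties corresponds to conjunction of laws, whose closure is the closure of the union. Second, implication between theories is inclusion of closed sets. The claim in the statement that the meet is given by the union of the maximal subsets requires that a union of closed sets is already closed, at least up to the listed classes. I will verify this directly on all pairs rather than assume it.

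\textbf{Countermodels (main obstacle).} Completeness requires showing that each closed set $T$ does not imply any law $E\notin T$. Equivalently, it suffices to give, for each of the $456$ theories and each of its non-members, a semigroup satisfying $T$ but not $E$. The statement about finite semigroups asks for these models to be finite. Since every positive implication holds in all semigroups, finite models also give the same lattice in the finite setting.

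To keep the count manageable I will not treat pairs individually. Instead I will assemble a library of finite semigroups: all small semigroups up to order about $4$ or $5$, found by brute force or taken from known tables; nilpotent quotients of free semigroups by the ideal of words longer than $n$; free bands and their small relatively free objects; and products of these. For each model I record the set of the $122$ laws it satisfies; this set is automatically closed. Each such set is an intersection of theories, so it separates many pairs at once. Direct products take intersections of these satisfied sets. What remains is to check that every closed set among the $456$ equals the intersection of the satisfied sets of the models containing it.

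The hard part will be the few stubborn pairs where no small model separates. For those I will first try finite quotients of the relatively free semigroup of $T$ on two or three generators, truncated by a high-degree nilpotent ideal, which gives a finite model. If that fails, I will use Rees quotients or ideal extensions built by hand. If a separation still cannot be found, that is evidence of a missing implication, which I will then derive in the positive phase. The search stops once the positive and negative certificates together account for all $456\times 122$ membership questions. This leaves the lattice exactly as stated.
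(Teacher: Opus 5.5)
Your overall architecture is the paper's: derivations found by an automated prover and replayed in Lean, theories represented by their sets of implied laws, and finite countermodels, which indeed also give the finite-semigroup statement. However, several steps fail as written.

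The order of a law is the \emph{total} number of operations on both sides, so you must enumerate $w=w'$ with $|w|+|w'|\le 6$ letters. Allowing $5$ letters on each side admits laws such as $x_1\cdots x_5=y_1\cdots y_5$ of order~$8$ and yields far more than $653$ laws.

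More seriously, the property you plan to verify on all pairs is false. The maximal sets of \Equaref{38} ($xx=xy$) and \Equaref{39} ($xx=yx$) are closed, but their union is not. Indeed \Equaref{38}${}\wedge{}$\Equaref{39} implies \Equaref{40} and \Equaref{41} ($xx=yz$), and neither lies in either set, as left-zero and right-zero semigroups show. The statement only means that the union of defining laws defines the meet, whose maximal set is the \emph{closure} of the union. What must be shown is that this closure is among the $456$, which is the content of \texttt{AT\_conj}.

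Similarly, you cannot close all $2^{122}$ subsets. Instead, as the paper does, conjoin each theory found so far with each single law, close, and stop when nothing new appears. Every finite set of laws is reached by adding laws one at a time, so this finds all classes and shows they are closed under meets.

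Your countermodel toolkit also misses the hardest separations in the paper.
\Equaref{56}${}\wedge{}$\Equaref{75} reads $x=xy^3=y^3x$. So every $y^3$ equals the unique two-sided identity, and the models are exactly the groups of exponent dividing~$3$. Likewise \Equaref{440}${}\wedge{}$\Equaref{513} ($x=xy^4=y^4x$) describes groups of exponent dividing~$4$, in which \Equaref{504} ($x=y^2xy^2$) says that squares are central. Refuting commutativity in the first theory needs a non-abelian group of order~$27$. Refuting \Equaref{504} in the second needs a group of order~$32$.

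Your library (semigroups of order at most~$5$, nilpotent quotients, bands, products) contains only abelian models of these theories. Your fallbacks give nothing either. Every model is a group, and groups have no proper ideals. Word length is also not well defined modulo $y^3=1$ or $y^4=1$. So nilpotent truncations, Rees quotients and ideal extensions produce no new models, and your stopping rule would send you after a false implication to \Equaref{504}.

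You need a genuine finite-model search (Mace4 plus dedicated searches, as in the paper). Alternatively, use the untruncated relatively free objects, here the finite Burnside groups $B(2,3)$ and $B(2,4)$.
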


We explain in \autoref{sec:informal} how the lattice was determined using the Prover9/Mace4 ATP\@, and describe how the \emph{minimal} representative of each equivalence class was selected.
To avoid introducing an ad-hoc numbering system, we adopt throughout the numbering conventions of the Equational Theories Project (ETP) introduced for magma equations, which results in skipping numerous equation numbers, as explained in \autoref{subsec:numbering}.
The explicit list of equivalence classes ($122$~laws and $456$~conjunctions) are given in \autoref{app:supplementary}, and as machine-readable data files available in the repository \url{https://github.com/blefloch/associative_theories/} described in \autoref{subsec:repository}.

All implications and conjunctions stated in the theorem are formalized in Lean, but we have not yet written an end-to-end theorem, as explained in \autoref{sec:lean}.
As in the ETP, the choice of Lean is rather arbitrary and stems solely from our limited knowledge of other formalization tools such as Rocq.
Most of the Lean code is generated by Python scripts rather than through clever Lean metaprogramming.  This leads to a rather bloated 45\,MB proof, which is locally quite readable but of course not fully human readable due to its length.  There were difficulties whereby even the \texttt{tauto} tactic was too slow to manipulate large conjunctions.

We end in \autoref{sec:future} by describing some links to other ongoing work and future work.

For now, after briefly reviewing in \autoref{subsec:varieties} the notion of lattice of semigroup variety and some earlier literature, we comment in \autoref{subsec:auto-vs-form} on the crucial distinction between automation and formalization.

\subsection{Varieties of semigroups}
\label{subsec:varieties}

In the language of universal algebra, a \emph{variety of semigroups} consists of a class of semigroups that can be characterized by a (possibly infinite) conjunction of such equations.  Thus, any intersection of varieties is a variety, characterized by the conjunction of all defining equations of the varieties of interest.
The set of all varieties of semigroups forms a lattice:
\begin{itemize}
\item the (partial) order is given by inclusion of varieties;
\item the meet operation is the intersection of varieties, which is defined for any collection of varieties (finite or infinite);
\item it admits the variety of all semigroups as its top element;
\item thus the join of two varieties can be defined as the meet of all their upper bounds, namely the smallest variety containing both varieties.
\end{itemize}

The lattice of all semigroup varieties is highly complicated; for instance it contains the partition lattice of a countably-infinite set~\cite{burris1971embedding,jevzek1976intervals}.  It has seen very substantive literature which we can only mention very superficially.
Reviews from 1971~\cite{evans1971lattice} and 2009~\cite{shevrin2009lattices} are available.
An important question is to find elements in this lattice with particular properties: atoms, modular elements, etc.~\cite{vernikov2007modular,vernikov2015special,skokov2025modular}.

Some sublattices admit explicit descriptions.
The lattice of \emph{band varieties} (idempotent semigroups) was fully described in 1970~\cite{biryukov1970varieties,fennemore1970all,GERHARD1970195}, with some later developments~\cite{wismath1986lattices,https://doi.org/10.1112/plms/s3-58.2.323,ghosh2005varieties,sapir2005variety,pastijn2005varieties}.
\emph{Completely regular semigroup} varieties were studied throughout the decades since at least the 80's~\cite{petrich1982varieties,jones1983lattice,reilly1985varieties,pastijn1985lattices,polak1985varieties,reilly1987lattice,polak1987varieties,polak1988varieties,trotter1989subdirect,pastijn1990lattice,reilly1990completely,pastijn1991pseudovarieties,zhang1996infinite,petrich2007canonical,Petrich03042014,petrich2015varieties,petrich2017another,petrich2018certain,petrich2024completely}.
The lattice of varieties of \emph{commutative semigroups} was only fully understood in the 90's~\cite{kisielewicz1994varieties}.
Many other sublattices were considered, cf.~\cite{petrich1974all,dolinka2001lattice,gusev2020lattice}.

Instead of treating equational laws as the basic building blocks characterizing varieties, a different starting point is to pick a semigroup~$S$, consider all equational laws it obeys, and the variety of semigroups satisfying all of these same laws.  See \cite{luo2011variety} for some results for $2,3$ elements.
This was considered for semigroups with up to~$5$ elements in~\cite{ARAUJO2023698}, whose authors provide a \emph{very extensive} set of calculational tools for semigroup varieties, as well as automated ways to explore the literature.
Since some non-implications in our work require counterexamples of order up to~$32$, the varieties we find are likely not all covered by that work.  It would nevertheless be desirable to match some of the $456$ varieties considered in the present work to those of~\cite{luo2011variety,ARAUJO2023698}.

\subsection{Automation vs formalization}
\label{subsec:auto-vs-form}

We find it relevant to distinguish between two axes: manual--automated and informal--formalized.  All four combinations were relevant in this work.
\begin{itemize}
\item \textit{Manual and informal:} deciding the numbering system (\autoref{subsec:numbering}) and how to represent equivalence classes of conjunctions (\autoref{subsec:representative-choice}); organizing how equivalence classes should be sought (\autoref{subsec:main-search}); writing the paper.

\item \textit{Manual and formalized:} setting up the Lean environment (\autoref{subsec:repository}, \autoref{fig:lakefile}), writing individual Lean theorems such as the main theorem \texttt{AT\_conj} (\autoref{sec:lean}).

\item \textit{Automated and informal:} calling the Mace4/Prover9 ATPs through Python scripts to initially get the implication graph (\autoref{subsec:main-search}).

\item \textit{Automated and formal:} Vampire--Lean integration for the equational reasoning; logical implication steps done by writing Lean code via Python scripts.
\end{itemize}

\subsection*{Acknowledgements}

The author thanks Equational Theories Project collaborators for a motivating and fun research setting, and especially Jose Brox for collaboration at early stages of this project.
The author is partially supported by the grants Projet-ANR-23-CE40-0010 and HORIZON-MSCA-2022-SE/101131233.

\subsection*{AI use statement}

ChatGPT (circa GPT-5) was used to suggest Lean syntax instead of consulting the Lean documentation, and to clarify a confusion about lattices.
Google Scholar and analogous bibliography search tools likely rely on some underlying machine learning.
It is possible that the search strategy used by the automated theorem provers Vampire and Prover9/Mace4 involves some learning.
No other AI was used in this project.

\section{Determining the implication graph informally}
\label{sec:informal}

\subsection{Numbering}
\label{subsec:numbering}

Since we manipulate somewhat large collections of laws, it is useful to number them.  Rather than introducing an ad-hoc numbering and having to provide a dictionary with other numbering systems, we adopt equation numbers introduced in the Equational Theories Project (ETP\@).  There, the equations of interest are of the form $w_1=w_2$ with $w_1,w_2$ constructed from some (universally quantified) variables $x_0,x_1,\dots$, a single binary operation~$\op$, and parentheses.  Equations are ordered by considering first their total order (number of appearances of~$\op$), then their shape (ordered recursively lexicographically), then lexicographically in the variables.
Tautological equations, namely equations whose left-hand side and right-hand side are exactly identical (with the same variables) are omitted, except for the equation $x_0=x_0$.
Equations that only differ by the exchange of the left-hand side and right-hand side, or by a relabelling of variables, are identified, and only the first of these (in the given order) is retained.
The resulting list of equations is then numbered consecutively
{\renewcommand{\implicitop}{\op}%
\Equareffull{1}, \Equareffull{2}, \Equareffull{3}, \Equareffull{4}, \Equareffull{5}, \Equareffull{6}, \dots}

For semigroup equations the situation is simpler as there is no shape to consider, beyond the number of~$\op$ on each side of the equation.  Thus, an equation $w_1=w_2$ is ordered before $w_1'=w_2'$ if
\begin{itemize}
\item its order is strictly less, or
\item its order is equal and the order of $w_1$ is strictly less than that of $w_1'$, or
\item the orders of $w_1$ and $w_1'$ are equal, the orders of $w_2$ and $w_2'$ are equal, and one has $w_1w_2=x_{i_1}\dots x_{i_k}$ and $w_1'w_2'=x_{j_1}\dots x_{j_k}$ with $(i_1,\dots,i_k)<(j_1,\dots,j_k)$ lexicographically.
\end{itemize}
As in the ETP, tautological equations other than $x_0=x_0$ are omitted, and only the minimal equation under exchanging the sides and relabelling variables is retained.
To avoid clutter, the variables are relabeled from $x_0,x_1,\dots$ to $x,y,z,\dots$ as needed.

The list of $31$ semigroup equations of order up to~$2$, ordered in this way, is thus
\begin{equation}
  \begin{aligned}
    \Equareffull{1},
    \Equareffull{2},
    \Equareffull{3},
    \Equareffull{4},
    \Equareffull{5},
    \Equareffull{6},
    \\
    \Equareffull{7},
    \Equareffull{8},
    \Equareffull{9},
    \Equareffull{10},
    \Equareffull{11},
    \\
    \Equareffull{12},
    \Equareffull{13},
    \Equareffull{14},
    \Equareffull{15},
    \Equareffull{16},
    \\
    \Equareffull{17},
    \Equareffull{18},
    \Equareffull{19},
    \Equareffull{20},
    \Equareffull{21},
    \\
    \Equareffull{22},
    \Equareffull{38},
    \Equareffull{39},
    \Equareffull{40},
    \Equareffull{41},
    \\
    \Equareffull{42},
    \Equareffull{43},
    \Equareffull{44},
    \Equareffull{45},
    \Equareffull{46}.
  \end{aligned}
\end{equation}
Observe the absence of the equation numbers E23 to E37: without associativity, these are equations of the form $x = (\_ \op \_) \op \_$ with $\_$ standing for various variables depending on the equation.  Associativity makes them immediately equivalent to an equation of the form $x = \_ \op (\_ \op \_)$.  More generally, the only magma equations that remain in the semigroup context are those whose left-hand side and right-hand side are right-associated.
The full list of semigroup equations of order up to~$4$ is given in \autoref{app:allequations}, organized according to their equivalence classes.
The number of associative equations of different orders, and the number of equivalence classes whose shortest representative has different orders, is
\begin{center}
  \begin{tabular}{lrrrrrr}
    \toprule
    order & 0 & 1 & 2 & 3 & 4 & total \\
    \midrule
    equations & 2 & 5 & 24 & 104 & 518 & 653 \\
    classes & 2 & 3 & 10 & 23 & 84 & 122 \\
    \bottomrule
  \end{tabular}
\end{center}

It is worth mentioning a potential pitfall when converting from magma equations to semigroup equations, in the case where the two sides have the same order. Consider as an example the magma equation \Equaref{4489} ($x \op (y \op y) = (z \op x) \op x$).  Under associativity, it reduces to the equation $x \op (y \op y) = z \op (x \op x)$, but as such, this equation is \emph{not} in the list of equations considered in the ETP: one must swap the two sides and relabel the variables to get \Equaref{4346} ($x \op (y \op y) = y \op (z \op z)$).
This led to a subtle bug in Python scripts at early stages of the project, where the inexistent equation $x \op (y \op y) = z \op (x \op x)$ was erroneously constructed as a Python object, and its ETP equation number was computed using code that assumed valid equations only.  A similar difficulty arises when dualizing equations, where simply reversing the left-hand side and the right-hand side does not ensure that they are in the correct order.

\subsection{Choice of representative}
\label{subsec:representative-choice}

The $653$ semigroup equations of order up to~$4$ reduce to~$122$ equivalence classes.  Each of the $2^{122}$ subsets of these equations could in principle define a different semigroup variety, but there are a huge number of equivalences, such as
\begin{equation}
  \begin{aligned}
    \Equaref{2}\ \wedge\ \text{anything} & \iff \Equareffull{2}, \\
    \Equareffull{38}\ \wedge\ \Equareffull{39} & \iff \Equareffull{41} .
  \end{aligned}
\end{equation}
Such equivalences drastically reduce the number of equivalence classes of conjunctions, down to only~$456$.
Almost all of these equivalence classes can thus be represented in numerous manners as conjunctions of subsets of the $122$~equations.  There are several methods to single out one choice.  We prefer the ``early representative'', as it is insensitive to cutting off the maximum equation order to~$4$, as proven in \autoref{prop:early-cutoff}.  This gives us a universally-defined way to number associative theories, as done in \autoref{app:alltheories}.

\begin{definition}\label{def:representatives}
  The \textbf{long representative (truncated to order~$k$)} of a conjunction of equations is the set of all equations of order up to~$k$ that it implies.

  The \textbf{minimal-equations representative} is obtained by selecting in the long representative the smallest number of equations whose conjunction is in the desired equivalence class; if multiple choices are available, choose the earliest in lexicographic order.

  The \textbf{early representative} is obtained by repeatedly removing from the long representative the highest-numbered equation that can be removed without changing the equivalence class.
\end{definition}

\begin{remark}
In \autoref{app:alltheories}, we list the $456$ classes by giving the early and the long representatives for each one.  We have not computed minimal-equations representatives in general.  In \autoref{tab:example-classes}, we give a few examples of equivalence classes.  While the early representatitive is canonical once the ordering of equations has been chosen, it often obscures features of the equivalence classes: most manifestly, the two projection laws \Equareffull{38}${}\wedge{}$\Equareffull{39} are simply equivalent to the constant law \Equareffull{41}.
Sometimes it is nevertheless clearer than the minimal-equations representative, which packs several pieces of information together: for instance, \Equareffull{43}${}\wedge{}$\Equareffull{56} gives commutativity and unit cubes, which are not manifest in the single equation \Equareffull{66} ---however both descriptions obscure the fact that cubes are all equal.
\end{remark}

\begin{table}
\caption{\label{tab:example-classes}Examples of equivalence classes of conjunctions and possible choices of representatives.  We recall \Equareffull{40} and \Equareffull{43}, which we do not explicit below for brevity.}
\centering\scriptsize
\begin{tabular}{lll}
  \toprule
  Early representative & Minimal-equations & Long representative \\
  \Equareffull{38}${}\wedge{}$\Equareffull{39} & \Equareffull{41} & $\{1, 38, 39, 40, 41\dots\}$ \\
  \Equaref{43}${}\wedge{}$\Equareffull{56} & \Equareffull{66} & $\{1, 43, 47, 56, 66\dots\}$ \\
  \Equaref{40}${}\wedge{}$\Equareffull{3306} & \Equareffull{3350} & $\{1, 40, 43, 3253\dots\}$ \\
  \Equaref{40}${}\wedge{}$\Equareffull{307} & \Equareffull{316} & $\{1, 40, 307, 308\dots\}$ \\
  \Equareffull{323}${}\wedge{}$\Equareffull{326} & \Equareffull{3309} & $\{1, 307, 323, 326\dots\}$ \\
  \Equaref{40}${}\wedge{}$\Equaref{43}${}\wedge{}$\Equareffull{307} & \Equaref{43}${}\wedge{}$\Equareffull{3255} & $\{1, 40, 43, 307, 308\dots\}$ \\
  \bottomrule
\end{tabular}
\end{table}

\begin{proposition}\label{prop:early-cutoff}
  Define a total order on finite sets~$\Sigma$ of semigroup equations by declaring that $\Sigma\leq\Sigma'$ if either of the following equivalent characterizations hold:
  \begin{itemize}
  \item for any equation in $\Sigma\setminus\Sigma'$ there exists a higher-numbered equation in $\Sigma'\setminus\Sigma$;
  \item either $\Sigma\subseteq\Sigma'$, or neither set is contained in the other and the highest-numbered equation of $\Sigma\setminus\Sigma'$ is less (in the ETP order) than the highest-numbered equation of $\Sigma'\setminus\Sigma$.
  \end{itemize}
  This is a lexicographic order in decreasing equation numbers.
  The early representative of an equivalence class of conjunctions is then the minimum subset~$\Sigma$ (for this order) whose conjunction is in the equivalence class.
  In particular, the early representative is unaffected by the cutoff on the equation order.
\end{proposition}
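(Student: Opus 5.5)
First I will check that the two characterizations agree and give a total order. If $\Sigma\subseteq\Sigma'$, the first condition holds vacuously, and so does the second. If $\Sigma\not\subseteq\Sigma'$, let $m$ and $m'$ be the highest-numbered equations of $\Sigma\setminus\Sigma'$ and of $\Sigma'\setminus\Sigma$ ($m'$ possibly absent). The first condition applied to $m$ gives $m'>m$. Conversely, if $m'>m$, then $m'$ exceeds every element of $\Sigma\setminus\Sigma'$. Both sets are finite and the ETP numbering totally orders equations, so comparing the top element of the symmetric difference gives a total order. This is lexicographic order on indicator vectors read from the highest index downward, which is exactly the stated ``lexicographic order in decreasing equation numbers''. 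The key property I will use is this: if $e\in\Sigma$ and every element of $\Sigma'\triangle\Sigma$ below $e$ is arbitrary, then removing $e$ gives a smaller set; more precisely, $\Sigma\setminus\{e\}<\Sigma'$ whenever $\Sigma$ and $\Sigma'$ agree above $e$, $e\in\Sigma\cap\Sigma'$, and $\Sigma'\setminus\{e\}$ is not in the class.

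Next I will show the early representative $E$ is the minimum. Let $L$ be the long representative, fix the class $C$, and note that any $\Sigma$ whose conjunction lies in $C$ satisfies $\Sigma\subseteq L$. The greedy procedure scans equations of $L$ from highest to lowest: at each stage it removes $e$ iff the current set minus $e$ still lies in $C$. (``Repeatedly removing the highest-numbered removable equation'' is equivalent to this single downward scan. A removal only makes the set smaller, so an equation that was not removable stays non-removable: if $S\setminus\{e\}\notin C$ and $S'\subseteq S$ with $S'\in C$, then $S'\setminus\{e\}\subseteq S\setminus\{e\}$, hence $S'\setminus\{e\}$ implies nothing more and is not in $C$, because $S\setminus\{e\}$ is implied by $S'\setminus\{e\}$'s superset, so $S'\setminus\{e\}$ is weaker than a set outside $C$ and in particular not equivalent to $C$.) Suppose now that $\Sigma\in C$ with $\Sigma<E$, and let $e$ be the highest element of $\Sigma\triangle E$, so $e\in E\setminus\Sigma$. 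Let $S$ be the current set when the scan reaches $e$. Then $S$ agrees with $E$ above $e$, hence with $\Sigma$ above $e$, and $S\subseteq L$ below $e$. Thus $\Sigma\subseteq S\setminus\{e\}\subseteq L$. Since $\Sigma$ already implies all of $L$, the set $S\setminus\{e\}$ lies in $C$. So $e$ would have been removed, a contradiction.

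Finally, for the cutoff independence, let $L_k\subseteq L_{k'}$ be the long representatives at cutoffs $k<k'$, for a class that is generated by equations of order $\le k$. The minimum over subsets of $L_{k'}$ lying in $C$ is attained by a set all of whose elements have order $\le k$. The reason is that ETP numbering puts all higher-order equations after the lower-order ones, and a set avoiding the high-numbered equations beats any set containing one. So the minimum over $L_{k'}$ equals the minimum over $L_k$.

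The main obstacle is the monotonicity argument justifying the single downward scan. It rests on two facts: removability only decreases as the set shrinks, and every member of $C$ implies all of $L$. I will make sure both are used correctly.
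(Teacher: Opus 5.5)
Your proof is correct and follows the paper's route. You check that the two characterizations agree and define a lexicographic total order, then get a contradiction from the greedy procedure's intermediate set at the top element $e$ of $\Sigma\triangle E$: minus $e$, that set still contains $\Sigma$ (what you need there is $S\supseteq L$ below $e$, not $S\subseteq L$), and the restriction of $\Sigma$ to order $\le k$ comes from higher-order equations being numbered later. Your monotonicity argument that repeated greedy removal amounts to a single downward scan supplies a step the paper only asserts (``equations are removed in order of decreasing equation number''), and your appeal to lexicographic order replaces the paper's four-case transitivity check.
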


\begin{proof}
  {\it Step 1. Total order.}
  For brevity, we denote by $\hen(\Sigma)$ the highest-equation number in the set~$\Sigma$.
  Let us first check that the two definitions of~$\leq$ are equivalent.  If $\Sigma\setminus\Sigma'=\emptyset$ then the first definition is vacuuous while the second holds thanks to $\Sigma\subseteq\Sigma'$.  Otherwise the condition that every equation in $\Sigma\setminus\Sigma'$ is less than an equation in $\Sigma'\setminus\Sigma$ implies that $\hen(\Sigma\setminus\Sigma')$ is less than some equation number in $\Sigma'\setminus\Sigma$, hence than $\hen(\Sigma'\setminus\Sigma)$, while conversely any equation in $\Sigma\setminus\Sigma'$ has a number ${}\leq\hen(\Sigma\setminus\Sigma')<\hen(\Sigma'\setminus\Sigma)$.

  The relation~$\leq$ is antisymmetric and total since for distinct sets $\Sigma,\Sigma'$, either one is contained in the other, or neither is, in which case $\Sigma\setminus\Sigma'$ and $\Sigma'\setminus\Sigma$ are non-empty disjoint subsets, whose highest-number equations are thus distinct and comparable by the total order on equations defined by the Equational Theories Project.
  Transitivity in $\Sigma\leq\Sigma'\leq\Sigma''$ is shown by distinguishing four cases.  First, if $\Sigma\subseteq\Sigma'\subseteq\Sigma''$ then $\Sigma\subseteq\Sigma''$.  Second, if $\Sigma\subseteq\Sigma'$ and $\hen(\Sigma'\setminus\Sigma'')<\hen(\Sigma''\setminus\Sigma')$ then by virtue of set inclusions $\Sigma\setminus\Sigma''\subset\Sigma'\setminus\Sigma''$ and $\Sigma''\setminus\Sigma'\subset\Sigma''\setminus\Sigma$ one gets
\begin{equation}
\hen(\Sigma\setminus\Sigma'')\leq\hen(\Sigma'\setminus\Sigma'')<\hen(\Sigma''\setminus\Sigma')\leq\hen(\Sigma''\setminus\Sigma) ;
\end{equation}
Third, if $\hen(\Sigma\setminus\Sigma')<\hen(\Sigma'\setminus\Sigma)$ and $\Sigma'\subseteq\Sigma''$ then again by set inclusions, $\hen(\Sigma\setminus\Sigma'')<\hen(\Sigma''\setminus\Sigma)$.  Fourth, if finally
\begin{equation}\label{hen-hen-assume}
  \hen(\Sigma\setminus\Sigma')<\hen(\Sigma'\setminus\Sigma) \text{ and }
  \hen(\Sigma'\setminus\Sigma'')<\hen(\Sigma''\setminus\Sigma')
\end{equation}
we consider three disjoint sets~$\Psi_i$ and their union,
\begin{equation}
  \begin{aligned}
    \Psi_1 & \coloneqq \Sigma\setminus\Sigma' , \quad \Psi_2 \coloneqq \Sigma'\setminus\Sigma'' , \quad \Psi_3 \coloneqq \Sigma''\setminus\Sigma ,
    \\
    \Psi & \coloneqq \Psi_1 \sqcup \Psi_2 \sqcup \Psi_3 = (\Sigma\cup\Sigma'\cup\Sigma'')\setminus(\Sigma\cap\Sigma'\cap\Sigma'') ,
  \end{aligned}
\end{equation}
and start by locating the highest-numbered equation $E$ in $\Psi\coloneqq$.
The assumptions in~\eqref{hen-hen-assume} forbid~$E$ from being in the first two of these subsets, hence $E$ is in $\Psi_3=\Sigma''\setminus\Sigma$.  As a result, $\hen(\Sigma\setminus\Sigma'')\leq\hen(\Psi_1\sqcup\Psi_2)<\hen(\Psi_3)$.
This concludes the proof that $\leq$ is a total order.

\medskip

{\it Step 2. Equivalence with \autoref{def:representatives}.}
Consider an equivalence class of (finite) conjunctions of equations, and fix an order cutoff~$k$ which is large enough that the equivalence class admits at least one representative consisting only of equations of order at most~$k$.  Denote by $\Lambda_k$ the long representative truncated to order~$k$ and by~$\Xi_k$ the early representative defined in \autoref{def:representatives}, obtained by greedily removing equations from~$\Lambda_k$ starting from the largest equation numbers (while keeping the conjunction's equivalence class fixed).  Our aim is to show that for any representative~$\Sigma$ one has $\Xi_k\leq\Sigma$ in the order analyzed in step~1.  Assume by contradiction that there exists a representative with $\Sigma<\Xi_k$.

One cannot have $\Sigma\subsetneq\Xi_k$ as the process defining the early representative would have continued removing equations from~$\Xi_k$.  We deduce that $\hen(\Sigma\setminus\Xi_k) < \hen(\Xi_k\setminus\Sigma)$.  Since all equations in~$\Xi_k$ have order at most~$k$, this inequality implies that all equations in $\Sigma\setminus\Xi_k$ (hence in all of~$\Sigma$) have order at most~$k$, and as a result $\Sigma\subset\Lambda_k$.

Denote now by~$E$ the highest-numbered equation of $\Xi_k\setminus\Sigma$, and split $\Lambda_k=\Lambda_k^-\sqcup\{E\}\sqcup\Lambda_k^+$ into equations whose ETP number is less than, equal to, or larger than that of~$E$, respectively.  In particular, $\Xi_k\setminus\Sigma\subset\Lambda_k^-\cup\{E\}$ and $\Sigma\setminus\Xi_k\subset\Lambda_k^-$, and the sets $\Xi_k$ and~$\Sigma$ agree on large equations in the sense that $\Xi_k\cap\Lambda_k^+=\Sigma\cap\Lambda_k^+$.
Next, observe that in the construction of~$\Xi_k$, the equations are removed from~$\Lambda_k$ in order of decreasing equation number, so at some intermediate stage one reaches the set $\Xi_k^{\text{step}}\coloneqq\Lambda_k^-\cup\{E\}\cup(\Lambda_k^+\cap\Xi_k)$.
What is the next equation to be removed at this step?
Since $\Xi_k$ only differs from $\Xi_k^{\text{step}}$ on low-numbered equations~$\Lambda_k^-$, the next equation that is removed is in that set.
However, before removing such a low-numbered equations, one should have removed~$E$, as the conjunction of $\Xi_k^{\text{step}}\setminus\{E\}$ is equivalent to that of $\Xi_k^{\text{step}}$: indeed, $E\not\in\Sigma$ hence
\begin{equation}
  \Sigma = (\Sigma\cap\Lambda_k^-)\cup(\Sigma\cap\Lambda_k^+)
  \subset \Lambda_k^- \cup (\Xi_k\cap\Lambda_k^+) = \Xi_k^{\text{step}}\setminus\{E\} ,
\end{equation}
so the conjunction of $\Xi_k^{\text{step}}\setminus\{E\}$ implies the conjunction of~$\Sigma$, which is in the equivalence class of interest.
Altogether, we have reached a contradiction with the construction of~$\Xi_k$, which ends the proof that early representatives are simply minimal for the total order~$\leq$.
\end{proof}

\subsection{Main search for proofs and counterexamples}
\label{subsec:main-search}

We now explain the script \texttt{scripts/semigroups-1.py} which found the $122$ equivalence classes of equations, and $456$ equivalence classes of conjunctions.
This script obtains ad-hoc representatives of the equivalence classes; it then produces data files with only the long representatives.  This data file is then organized as wanted by \texttt{scripts/semigroups-2-organize.py}

The first step concerns individual equations.
We define a wrapper function that attempts to resolve a given implication $E\implies E'$ by testing it against countermodels found so far, then attempting to prove it with a call to the Prover9 theorem prover, and if that fails attempting to disprove it with Mace4 (see \autoref{fig:prover9-mace4}).
When both approaches fail, a warning message is provided.  This happens especially often if a candidate implication is false but its smallest counterexample is either very large or infinite.  In the present project, whenever this occurred, a dedicated search provided a finite countermodel which was simply added to the list of known models to test.

\begin{figure}\centering
\begin{minipage}[t]{40ex}
\begin{lstlisting}[basicstyle=\ttfamily\scriptsize,linewidth=30ex,frame=single]
assign(sos_limit, 2000).
assign(max_weight, 1000).
assign(max_seconds, 1).

formulas(sos).
x=y*(x*y). x*(y*z)=(x*y)*z.

end_of_list.
formulas(goals).
x=x*x.

end_of_list.
\end{lstlisting}
\end{minipage}\quad
\begin{minipage}[t]{40ex}
\begin{lstlisting}[basicstyle=\ttfamily\scriptsize,linewidth=35ex,frame=single]
assign(selection_order, 2).
assign(selection_measure, 1).
assign(max_models, 1).
assign(max_seconds, 1).
assign(iterate_up_to, 100).

formulas(sos).
x=y*(x*y). x*(y*z)=(x*y)*z.

end_of_list.
formulas(goals).
x=x*x.

end_of_list.
\end{lstlisting}
\end{minipage}
\caption{\label{fig:prover9-mace4}Tool calls generated for the (false) implication from \Equareffull{14} (plus associativity) to \Equareffull{3}. \emph{Left:} Prover9. \emph{Right:} Mace4.}
\end{figure}

Some of the data is cached to avoid wasting seconds attempting to prove implications that are already known to be false or easily disproven by existing countermodels.
\begin{itemize}
\item a mapping from semigroup equations to minimal-numbered representatives of their equivalence class found so far;

\item a list of countermodels found (sometimes slowly) by Mace4, and a two-dimensional array stating for each model found so far and each (equivalence class of) semigroup equation found so far, whether the model satisfies the equation; this can be slow to compute for large models and equations with many variables;

\item a list of implications between these equivalence classes of individual equations.
\end{itemize}

At the end of this first step, everything is known about the equivalence classes and implication graph for individual semigroup equations of order up to~$4$.

\bigskip

Next, starting from the $122$ equations, for each associative theory found so far, we take its conjunction with any of the $122$ equations (restricted to having a larger equation number than any of those in the associative theory).  For each such conjunction we quickly check whether it is known to be equivalent to these same equations with one equation omitted.  If not, we must work harder: we use the aforementioned wrapper around Prover9 and Mace4 to check (and cache) which of the $122$ equations is implied by the conjunction, which allows us to very quickly evaluate whether the conjunction is equivalent to any of the conjunctions found previously.  If not, it is a new associative theory to be added to the list.
Eventually, the search stops with $456$ associative theory whose conjunctions with any of the $122$ equations has been found to be equivalent to other associative theories.  Along the way, we computed all individual equations implied by every associative theory, which is precisely what is needed to determine the sought-for long representatives.

\bigskip

We emphasize that this informal search gives insufficient correctness guarantees, as the Python script can have bugs or make incorrect assumptions on the behaviour of the external tools Mace4/Prover9.  Both happened in the course of this project:
\begin{itemize}
\item early on, success of Prover9 was not checked using the exit code, but by searching for the string ``\texttt{Exiting with 1 proof}'' in the output, whereas of course when one succeeds in proving two goals the output can be ``\texttt{Exiting with 2 proofs}'';
\item as explained at the end of \autoref{subsec:numbering}, constructing an internal representation of equations directly led to invalid objects and to claims of equivalence, which were only caught when failing to formalize them using the Vampire-to-Lean integration in \autoref{sec:lean}.
\end{itemize}

\subsection{Repository and data files}
\label{subsec:repository}

The scripts, Lean code and results of our exploration are available in the repository \url{https://github.com/blefloch/associative_theories/} which has the following structure.  Throughout the repository, \texttt{README.md} files describe the contents of the various directories in much more detail than here, as well as the relations between different files.

\begin{itemize}
\item \texttt{data/associative\_theories.json} is the main output of the project, a list of $456$ JSON objects each describing an associative theory, with properties
  \begin{itemize}
  \item \texttt{id}, index in the order~$\leq$ used to define
    \texttt{early} representatives in \autoref{prop:early-cutoff},
  \item \texttt{early}, early representative, as a list of ETP equation
    numbers including associativity \Equaref{4512},
  \item \texttt{long}, full list of ETP equation numbers implied by the
    equivalence class,
  \item \texttt{dual}, the \texttt{early} property of the dual
    associative theory.
  \end{itemize}

\item \texttt{associative\_theories.lean} is the main Lean file, which loads code imported from the Equational Theories Project (in \texttt{equational\_\allowbreak theories/}) that provides a few tools to manipulate equations, and code in \texttt{associative\_theories/} specific to this project; our strategy and files are described in detail in the next section.

\item \texttt{scripts/} has the Python scripts to generate all files in \texttt{data/}, and most of the contents of \texttt{associative\_theories/}; they rely on a separate installation of Prover9/Mace4 and Vampire.

\item \texttt{commentary/} features individual files with comments for many associative theories; none of it is formalized, and the list is not particularly complete as the comments were added by chance as we looked at specific theories.
\end{itemize}

\section{Lean formalization}
\label{sec:lean}

The Lean framework we use for the formalization is largely based on tools developed in the Equational Theories Project (ETP\@), most importantly the conversion from the Vampire automated theorem prover to Lean code.
For clarity, we isolate code from that project into the \texttt{equational\_theories} directory, which we treat as a library on the same footing as \texttt{mathlib}, while new code goes in a separate directory \texttt{associative\_theories} (see the \texttt{lakefile} in \autoref{fig:lakefile}).

\begin{figure}\centering
\fbox{
\begin{lstlisting}[basicstyle=\ttfamily\scriptsize]
name = "associative_theories"
defaultTargets = ["associative_theories"]

[leanOptions]
pp.unicode.fun = true
autoImplicit = false
relaxedAutoImplicit = false

[[require]]
name = "mathlib"
scope = "leanprover-community"
rev = "v4.20.0"

[[lean_lib]]
name = "equational_theories"

[[lean_lib]]
name = "associative_theories"
\end{lstlisting}}
\caption{\label{fig:lakefile}Configuration file \texttt{lakefile.toml}}
\end{figure}

The main theorems established in Lean are as follows.  The first states that for any of the $122$ equations a magma~$G$ satisfies associativity and this equation if and only if it satisfies the appropriate one of $456$ associative theories.
The second states that for any of the $456$ associative theories, and any of the $122$ equations, there exists an associative theory that is equivalent to their conjunction (for any semigroup~$G$).  The third states that for any pair of associative theories, if the (explicitly defined) function \texttt{AT\_impliesQ} gives True, then indeed the first associative theory implies the second.
\begin{center}
\begin{minipage}{76ex}
\begin{lstlisting}[language=lean,linewidth=74ex,frame=single]
theorem EQ_equiv (eqid : EQIndex) : ∀ (G : Type*) [Magma G],
  (Equation4512 G) ∧ (EQeq eqid G) <-> ATearly (EQ_to_AT eqid) G

theorem AT_conj (atid : ATIndex) (eqid : EQIndex) : ∃ (atid2 : ATIndex),
  ∀ (G: Type*) [Magma G], (ATearly atid G) ∧ (EQeq eqid G) <-> (ATearly atid2 G)

theorem AT_implies (atid: ATIndex) (atid2: ATIndex) :
  AT_impliesQ atid atid2 ->
  ∀ (G: Type*) [Magma G], (ATearly atid G) -> (ATearly atid2 G)
\end{lstlisting}
\end{minipage}
\end{center}

The Lean formalization splits into several tasks.
\begin{itemize}\setlength{\itemsep}{3pt plus 3pt}
\item \emph{Formalization skipped.} Prove that all equations of order $\leq 4$ are in a set of $653$ equations.  This has not been formalized in Lean.  The \texttt{laws\_complete} theorem from the ETP would be useful, but there are difficulties in defining and manipulating a map from the $4694$ equations of the ETP to their right-associated counterpart.

\item Prove that these $653$ equations are all equivalent to $122$.  This is done by establishing pairs of implications from each equation to its (lowest-numbered) representative using the Vampire--Lean integration.

\item Prove all single-equation implications: a minimal set of implications that generates the whole implication graph by transitivity is formalized using the Vampire--Lean integration, and transitivity is implemented by assembling these results by hand in \texttt{OneImpliDeduced.lean} to get all positive implications between single equations.

\begin{figure}\centering
\renewcommand{\leancomment}[1]{}
\fbox{
\begin{lstlisting}[firstline=10,lastline=21,language=lean]
theorem Equation3_56_4512_implies_Equation4 (G : Type*) [Magma G]
    (h3 : Equation3 G) (h56 : Equation56 G) (_ : Equation4512 G) : Equation4 G := by
  by_contra nh
  simp only [not_forall] at nh
  obtain ⟨sK0, sK1, nh⟩ := nh
  have eq12 (X0 : G) : (X0 ◇ X0) = X0 := mod_symm (h3 ..)
  have eq13 (X0 X1 : G) : (X0 ◇ (X1 ◇ (X1 ◇ X1))) = X0 := mod_symm (h56 ..)
  have eq15 : sK0 ≠ (sK0 ◇ sK1) := mod_symm nh
  have eq16 (X0 X1 : G) : (X0 ◇ (X1 ◇ X1)) = X0 := superpose eq12 eq13 -- forward demodulation 13,12
  have eq17 (X0 X1 : G) : (X0 ◇ X1) = X0 := superpose eq12 eq16 -- forward demodulation 16,12
  subsumption eq15 eq17
\end{lstlisting}}
\caption{\label{fig:vampireproventwo}Implication \Equaref{3}${}\wedge{}$\Equaref{56}${}\wedge{}$associativity$\implies$\Equaref{4} formalized by converting a Vampire proof to Lean using code from the Equational Theories Project}
\end{figure}

\item Additionally prove a minimal set of implications with multiple premises using the Vampire--Lean integration.  An example is given in \autoref{fig:vampireproventwo}.

\item Define $456$ conjunctions of equations, both through their early representatives and their long representatives, and prove their equivalence.

\item By pure logic manipulations (based on the equivalence of early and long), but with some speed difficulties, formalize the implication graph between associative theories, as a single theorem \texttt{AT\_implies}.

\item Then, the slowest part of the project to compile: for each associative theory and each equation, prove the theorem \texttt{AT\_conj} stating that their conjunction is equivalent to one of the associative theory in the list; as a corollary it implies that each equation on its own is equivalent to an associative theory.

\item \emph{Formalization skipped.} Prove that for any subset of the $653$ semigroup equations their conjunction is one of the associative theory.

\item \emph{Formalization skipped.} The above description concentrates on proving positive implications and equivalence, not proving that theories are really distinct.  The necessary countermodels are formalized (cf.\ the \texttt{Countermodels} directory and \autoref{fig:countermodel}), based on ETP tools to manipulate finite operation tables.  The structure of the files differs slightly from the ETP: given the centralized nature of this project, the countermodels are numbered sequentially instead of using their operation table as a name of the model and of the corresponding Lean theorem.  In this way the operation table is only specified once in the file.  However, the countermodels have not yet been used to formalize the disproof of implications.
\end{itemize}

\begin{figure}\centering
\fbox{\hbox to .88\textwidth{
\begin{lstlisting}[language=lean,linewidth=.9\textwidth,breaklines=true]
import equational_theories.Equations.All
import equational_theories.FactsSyntax
import equational_theories.MemoFinOp
import equational_theories.DecideBang
import Mathlib.Data.Finite.Prod

private def model10_op := finOpTable "[[0,0,2],[0,1,2],[0,2,2]]"

theorem facts_from_model10 :
  ∃ (G : Type) (_ : Magma G) (_: Finite G), Facts G
  [1, 3, 8, 47, 307, 323, 326, 333, 411, 3253, 3306, 3309, 3316, 3319, 3346, 3353, 4284, 4291, 4320, 4512]
  [2, 4, 5, 10, 11, 14, 16, 38, 39, 40, 41, 43, 56, 66, 75, 308, 309, 310, 311, 312, 313, 314, 315, 316, 318, 325, 327, 329, 332, 343, 419, 429, 440, 477, 504, 513, 3255, 3256, 3258, 3259, 3260, 3261, 3264, 3265, 3267, 3271, 3273, 3274, 3275, 3277, 3278, 3290, 3292, 3300, 3308, 3315, 3322, 3323, 3326, 3331, 3334, 3342, 3350, 3388, 3414, 4268, 4269, 4270, 4271, 4272, 4273, 4274, 4275, 4276, 4277, 4278, 4279, 4280, 4283, 4286, 4287, 4288, 4290, 4293, 4296, 4297, 4299, 4300, 4301, 4304, 4305, 4314, 4315, 4318, 4321, 4325, 4327, 4331, 4343, 4358, 4362, 4364, 4369]
  :=
  ⟨_, Magma.mk model10_op, Finite.of_fintype _, by decideFin!⟩

\end{lstlisting}\hss}}
\caption{\label{fig:countermodel}An example countermodel file.  Contrarily to the Equational Theories Project, the operation table $[[0,0,2],[0,1,2],[0,2,2]]$ (which states that $x\op 1=x$ and other $x\op y=y$) only appears once in the file.}
\end{figure}

All in all the main theorems are \texttt{AT\_implies} and \texttt{AT\_conj}, corresponding to the order and meet operation on the lattice of semigroup varities.

\section{Future work}
\label{sec:future}

The data files are sufficient to extract any information of interest on conjunctions, or implications between the laws, but it would be valuable to create visualization tools akin to the graph explorer developped in the Equational Theories Project.  Likewise, some commentary files were written in the course of the project, cf.~\texttt{commentary/} directory of \url{https://github.com/blefloch/associative_theories/}, but are currently not accessible in a user-friendly manner.
Some of the counterexamples likely have mathematical structure: for instance, models of the associative theory (cf.~\autoref{app:alltheories} for the full list of equations it implies)
\begin{equation}
  \textbf{AT-24:} \ \Equareffull{56}\ \wedge\ \Equareffull{75}\ \wedge\ \text{associativity}
\end{equation}
have sizes\footnote{The list of sizes was found by a Mace4 run rather than an abstract argument.} $1,3,9,27,81,\dots$, and the smallest non-commutative model has size~$27$.  Likewise, models of the associative theory
\begin{equation}
  \textbf{AT-62:} \ \Equareffull{440}\ \wedge\ \Equareffull{513}\ \wedge\ \text{associativity}
\end{equation}
have sizes $1,2,4,8,16,32,\dots$, and the smallest model that does not obey \Equareffull{504} has size~$32$.

In contrast to the case of magmas treated in the Equational Theories Project, we found no implication holding for finite sets but failing in general due to an infinite counterexample.  This raises the question of whether the situation would change for equations of order~$5$ or~$6$.
Equations of order up to~$5$ seem within reach: preliminary investigation shows that there are $380$ equivalence classes of equations up to that order, which is approximately three times as much as for equations of order up to~$4$.  Conjunctions might of course be significantly more numerous as the number of subsets of equations grows exponentially.

A much more difficult endeavour would be to determine all conjunctions of \emph{magma equations} of order up to~$4$.  This realm includes many interesting structures such as groups (equipped with division) or Boolean algebra.  Preliminary investigations imply a lower bound of several million equivalence classes.  The case of equations of order up to~$3$ is more reasonable: there are only a few thousand equivalence classes and the full implication semilattice (both finite and infinite) is almost determined, in work in progress by other members of the Equational Theories Project.

It would be interesting to compare the low-order results obtained in this work to the complete descriptions that are known for bands (idempotent semigroups) \cite{biryukov1970varieties,fennemore1970all,GERHARD1970195} and for commutative semigroups~\cite{kisielewicz1994varieties}.
Quasivarieties, defined by conditions of the form $\forall x,\dots,\forall y,(u_1=v_1\wedge\dots\wedge u_k=v_k)\implies u_0=v_0$ where $u_i,v_i$ are words in the variables $x,\dots,y$, are also very interesting~\cite{sapir1985lattice,adams2003quasivarieties} and worth exploring at low orders.

\begin{raggedright}
\bibliographystyle{plainurl}
\bibliography{references}
\par
\end{raggedright}

\appendix
\crefalias{section}{appendix}
\raggedbottom

\section{Supplementary material}
\label{app:supplementary}

\subsection{Equivalence classes of semigroup equations}
\label{app:allequations}

As explained in \autoref{subsec:numbering}, we adopt numbering conventions of the Equational Theories Project, keeping only the $653$ right-associated equations.
Here we list the $122$ equivalence classes of these equations, with the equivalences being understood assuming associativity.  The first equation in each line (namely, the lowest-numbered one) then serves as a representative of the whole class in \autoref{app:alltheories}.
The list here is extracted automatically from the file \texttt{data/associative\_equation\_classes.json} in the repository \url{https://github.com/blefloch/associative_theories/}

\begin{itemize}\setlength{\itemsep}{3pt}\raggedright
\ExplSyntaxOn
\cs_generate_variant:Nn \prop_set_from_keyval:Nn { Nx }
\file_get:nnN {associative_equation_classes.json} {} \l_tmpa_tl
\tl_replace_all:Nnn \l_tmpa_tl [ { { \if_false: } \fi: }
\tl_replace_all:Nnn \l_tmpa_tl ] { \if_false: { \fi: } }
\use:x { \exp_not:n { \clist_set:Nn \l_tmpa_clist } \l_tmpa_tl }
\clist_map_variable:NNn \l_tmpa_clist \l_tmpa_tl
  {
    \item
      \bool_set_false:N \l_tmpa_bool
      \clist_map_inline:Nn \l_tmpa_tl
      {
        \bool_if:NT \l_tmpa_bool { \ $\Leftrightarrow$\ }
        \bool_set_true:N \l_tmpa_bool
        \mbox{\Equareffull{#1}}
      }
  }
\end{itemize}

\subsection{List of associative equational theories}
\label{app:alltheories}

Out of these $122$ equivalence classes of equations, we list all equivalence classes of conjunctions.
This is given first as the minimal representative (as defined in \autoref{subsec:representative-choice}), then as the full set of equations (among the $122$ plus associativity) that are implied by the given theory.  In each case we indicate the dual theory (in the sense of exchanging operands of~$\op$), or indicate that the theory is self-dual.
The following list is extracted automatically from the file \texttt{data/associative\_theories.json} in the repository \url{https://github.com/blefloch/associative_theories/}

\begin{enumerate}\raggedright\setlength{\itemsep}{3pt}
\ExplSyntaxOn
\cs_generate_variant:Nn \prop_set_from_keyval:Nn { Nx }
\cs_generate_variant:Nn \clist_use:nn { x }
\cs_generate_variant:Nn \prop_const_from_keyval:Nn { cx }
\file_get:nnN {associative_theories.json} {} \l_tmpa_tl
\cs_set:Npn \tmp:w [#1] {#1}
\tl_set:Nx \l_tmpa_clist { \exp_last_unbraced:NV \tmp:w \l_tmpa_tl }
\int_new:N \l_my_id_int
\prop_new:N \l_early_to_id_prop
\clist_map_variable:NNn \l_tmpa_clist \l_tmpa_tl
  {
    \int_incr:N \l_my_id_int
    \tl_replace_all:Nnn \l_tmpa_tl " { }
    \tl_replace_all:Nnn \l_tmpa_tl : { = }
    \tl_replace_all:Nnn \l_tmpa_tl [ { { \if_false: } \fi: }
    \tl_replace_all:Nnn \l_tmpa_tl ] { \if_false: { \fi: } }
    \prop_const_from_keyval:cx { c_my_ \int_use:N \l_my_id_int _prop } \l_tmpa_tl
    \prop_put:Nxx \l_early_to_id_prop
      { \prop_item:cn { c_my_ \int_use:N \l_my_id_int _prop } { early } }
      { \int_use:N \l_my_id_int }
  }
\tl_new:N \l_my_early_tl
\tl_new:N \l_my_dual_tl
\int_step_inline:nn { \l_my_id_int }
  {
    \prop_set_eq:Nc \l_tmpa_prop { c_my_#1_prop }
    \item[{\bf AT-\prop_item:Nn \l_tmpa_prop { id }:}]
      \prop_get:NnN \l_tmpa_prop { early } \l_my_early_tl
      \prop_get:NnN \l_tmpa_prop { dual } \l_my_dual_tl
      \clist_set:Nx \l_tmpb_clist { \l_my_early_tl }
      \bool_set_false:N \l_tmpa_bool
      \clist_map_inline:Nn \l_tmpb_clist
        {
          \bool_if:NT \l_tmpa_bool { \ $\land$\ }
          \bool_set_true:N \l_tmpa_bool
          \int_compare:nTF { ##1 != 4512 }
            { \mbox{\Equareffull{##1}} }
            { associativity }
        }
      \str_if_eq:VVTF \l_my_early_tl \l_my_dual_tl
        { ,~self-dual }
        { ,~dual~to~ AT-\prop_item:No \l_early_to_id_prop { \l_my_dual_tl } }
      \\
      {\scriptsize\sl\{
      \clist_use:xn { \prop_item:Nn \l_tmpa_prop { long } } { ,~ }
      \}}
  }
\end{enumerate}

\end{document}